\newtheorem{theorem}{Theorem}[section]
\newtheorem{lemma}[theorem]{Lemma}
\newtheorem{corollary}[theorem]{Corollary}
\theoremstyle{definition}
\newtheorem{remark}[theorem]{Remark}
\numberwithin{equation}{section}
\newskip\aline \newskip\halfaline
\def\skipaline{\vskip\aline}
\def\qedbox{$\rlap{$\sqcap$}\sqcup$}
\def\qed{\nobreak\hfill\penalty250 \hbox{}\nobreak\hfill\qedbox\skipaline}
\DeclareMathOperator{\Aff}{\mathbf{Aff}}
\newcommand{\bC}{\mathbb{C}}
\newcommand{\bZ}{\mathbb{Z}}
\newcommand{\eR}{\mathscr{R}}
\newcommand{\eS}{\mathscr{S}}
\newcommand{\rp}{\,\big)}
\newcommand{\lp}{\big(\,}
\newcommand{\Rp}{\,\Big)}
\newcommand{\Lp}{\Big(\,}
\begin{document}

\title{On  the polynomial equation $P(Q(x_1,...,x_m))=Q(P(x_1),...,P(x_m))$}
\author{Arseny Mingajev}
\address{Department of Mathematics, Trinity University, San Antonio, TX78212}
\date{\today}

\begin{abstract} 
    We consider the equation $P(Q(x_1,...,x_\nu))=Q(P(x_1),...,P(x_\nu))$ in polynomials $Q\in\bC[x_1,\dotsc,x_\nu]$, $P\in\bC[x]$, and prove that if $\deg(P)>1$, then it is only solvable in polynomials that are affinely conjugate to monomials. 
\end{abstract}

\maketitle

\section{Introduction}
In 1922 and 1923, Gaston Julia \cite{Julia}  and Joseph Ritt \cite{Ritt}  proved that, if two polynomials in one variable $P(x)$ and $Q(x)$ commute with each other, then one of three statements is true:

\begin{enumerate}

\item  (Ritt, \cite{Ritt})$P$ and $Q$ are both affinely conjugate to monomials; 
\item  (Ritt, \cite{Ritt}) $P$ and $Q$ are both affinely conjugate to Chebyshev polynomials;

\item  (Julia, \cite{Julia} ) $P^{\circ\nu}=Q^{\circ\mu}$ for some positive integers $\nu,\mu.$

\end{enumerate}

An elementary proof of this theorem was published by Block and Thielman  in a 1951 paper,  \cite{BT}.    The results of  Ritt and Julia can be interpreted as results about commuting  rational maps $\mathbb{CP}^1\to\mathbb{CP}^1$. The commutativity of rational maps $\mathbb{CP}^N\to\mathbb{CP}^N$ is still a problem  that is being investigated; see  \cite{JKS} and the references therein.

In this paper, we consider another  multivariable version of this problem. For  each  natural number $\nu>1$  we denote by $\eS_\nu$   the subset of  $\bC[x]\times \bC[x_1,\dotsc, x_\nu]$ consisting of polynomials $P\in \bC[x]$ and $Q\in \bC[x_1,\dotsc, x_\nu]$, $\deg P,\deg Q, \geq 1$,  satisfying the functional equation
\begin{equation}\label{main}
P\big(\, Q(x_1,\dotsc,x_\nu)\rp=Q\lp P(x_1),\dotsc, P(x_\nu)\,\big)
\end{equation}
The main goal of this paper is to describe the structure of $\eS_\nu$ for $\nu\geq 2$. A few elementary observations are in order.

Denote by  $\Aff$  the group of affine bijections 
\[
\bC\ni z \mapsto az+b \in \bC,  \;\; a,b\in \bC, \;\;a\neq 0. 
\]
For any natural number $n$, the group $\Aff$ acts  on $\bC[z_1,\dotsc, z_n]$ by conjugation,
\[
\Aff\times \bC[z_1,\dotsc, z_\nu]\ni (\alpha , R)\mapsto C_\alpha (R)\in \bC[z_1,\dotsc, z_\nu],
\]
\[
\;C_\alpha (R)(z_1,\dotsc, z_\nu]= \alpha\circ R\lp \alpha^{-1}(z_1),\dotsc \alpha^{-1}(z_\nu)\rp.
\]
 An elementary computation shows that
 \[
 \lp P, Q\rp\in \eS_\nu\iff  \lp C_\alpha(P), C_\alpha(Q)\rp \in \eS_\nu,\;\;\forall \alpha\in \Aff.
 \]
 In other words, $\eS_\nu$ is invariant with respect to the action of $\Aff$.  
 
 \begin{lemma}\label{lemma: 11} Any polynomial $P(x)\in\mathbb{C}[x]$, $\deg P>0$ is affinely  conjugate to a polynomial vanishing at $0$. Moreover, if $\deg P>1$, then $P$ is affinely conjugate  to a polynomial with leading coefficient $1$ and vanishing at $0$.
  \end{lemma}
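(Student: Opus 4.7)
The plan is to split the problem into two independent normalizations, each achieved by a simple one-parameter family of affine maps: translations to move a fixed point of $P$ to the origin, and dilations to rescale the leading coefficient without disturbing the condition $P(0)=0$.

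For the first statement, I would begin by unpacking what it means for a conjugate $C_\alpha(P)$ to vanish at $0$. Writing $\alpha(z)=az+b$, one has
\[
C_\alpha(P)(0)=\alpha\bigl(P(\alpha^{-1}(0))\bigr),
\]
which equals $0$ precisely when $\alpha^{-1}(0)$ is a fixed point of $P$. Hence the first statement reduces to exhibiting a fixed point $x_0$ of $P$; one then takes $\alpha(z)=z-x_0$, so that $\alpha^{-1}(0)=x_0$, and $C_\alpha(P)(z)=P(z+x_0)-x_0$ indeed vanishes at the origin. A fixed point exists by the fundamental theorem of algebra applied to $P(x)-x$ as long as this polynomial is nonconstant, which is automatic whenever $\deg P\geq 2$ (and also covers every degree-one $P$ other than the pure translations $x+c$, $c\neq 0$).

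For the ``moreover'' part, I would assume $P$ has already been conjugated into the normal form $P(x)=a_nx^n+a_{n-1}x^{n-1}+\dotsb+a_1x$ with $n=\deg P\geq 2$ and $a_n\neq 0$. Now I would use a pure dilation $\alpha(z)=\lambda z$, $\lambda\neq 0$. Because $\alpha^{-1}(0)=0$, conjugation by $\alpha$ preserves the vanishing at the origin. A direct computation gives
\[
C_\alpha(P)(z)=\lambda\, P(z/\lambda)=a_n\lambda^{1-n}z^n+a_{n-1}\lambda^{2-n}z^{n-1}+\dotsb+a_1 z,
\]
so the new leading coefficient is $a_n\lambda^{1-n}$. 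Choosing $\lambda$ to be any $(n-1)$-st root of $a_n$, which exists in $\bC$ since $n-1\geq 1$, produces leading coefficient $1$.

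There is no real obstacle here: both steps are routine, and the only mildly subtle point is noting that a dilation (as opposed to a general affine map) is used in the second step precisely so that the normalization achieved in the first step is not destroyed. The argument also makes clear why the assumption $\deg P>1$ is needed for the second normalization, since for $\deg P=1$ the exponent $n-1$ would vanish and the leading coefficient would be invariant under scaling.
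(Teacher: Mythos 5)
Your proof is correct and follows essentially the same route as the paper: translate a fixed point of $P$ (a root of $P(x)-x$) to the origin, then rescale by a dilation $z\mapsto\lambda z$ with $\lambda^{n-1}$ equal to the leading coefficient. You are in fact slightly more careful than the paper in observing that the first normalization fails for $P(x)=x+c$, $c\neq 0$, which has no fixed point; the paper glosses over this edge case (harmlessly, since the lemma is only invoked for $\deg P>1$).
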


\begin{proof} Let  $\lambda\in \Aff$,  $\lambda (z)= z-r$, where $r$ is a root of the equation $p(x)-z=0$.  Then $\lambda^{-1}(0)=r$ and 
\[
C_\lambda (P)(0 )= P\lp \lambda^{-1}(0)\rp-r = P(r)-r=0.
\]
This proves the first part. Suppose now that $d=\deg P>1$ and $P(0)=0$.  If $\alpha(z)=cz$, $c\neq 0$, then $C_\alpha (P)(z)= cP(z/c)$. If $p_d$ is the leading coefficient of $P$, then the leading coefficient of $C_\alpha(P)$ is $c^{1-d}p_d$. Since $d>1$ we can find $c$ such that $c^{1-d}p_d=1$.
  \end{proof}

 For any multi-index $\alpha\in \bZ_{\geq 0}^\nu$ we set 
 \[
 |\alpha|=\alpha_1+\cdots+\alpha_\nu,\;\; x^\alpha:=x_1^{\alpha_1}\cdots x_\nu^{\alpha_\nu}.
 \]
 We can now state our  main result.

\begin{theorem}\label{th: main}  Let $P\in \bC[x]$, $Q\in \bC[x_1,\dotsc, x_\nu]$, $\nu\geq 2$, $\deg P=n>1$.  If $(P,Q)\in \eS_\nu$ and
\begin{equation}\label{nondeg}
\forall i\in \{1,\dotsc,\nu\},\;\;Q\not\in\bC[z_i],
\end{equation}
then there exists an affine transformation $\lambda \in \Aff$,  a constant $c\in\bC$,    and  a nonzero multi-index $\alpha \in\bZ_{\geq 0}^\nu$  such that 
\[
C_\lambda (P)(x)=x^n,\;\; C_\lambda(Q)(x_1,\dotsc, x_\nu)= cx^\alpha, \;\;c^{n-1}=1.
\]
\end{theorem}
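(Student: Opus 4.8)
The plan is to exploit the $\Aff$-invariance of $\eS_\nu$ to normalize $P$, to extract from \eqref{main} a clean identity for the top homogeneous part of $Q$, and then to bootstrap from the leading term to the whole pair $(P,Q)$. Using the $\Aff$-invariance of $\eS_\nu$ together with Lemma~\ref{lemma: 11}, I first normalize so that $P$ is monic, say $P(x)=x^n+(\text{lower order})$. Writing $Q=Q_d+Q_{d-1}+\cdots$ for the decomposition into homogeneous components with $Q_d\neq0$, $d=\deg Q$, I would compare the homogeneous parts of degree $nd$ on the two sides of \eqref{main}. Since each $P(x_j)=x_j^n+(\text{lower order})$, the top part of $P(Q)$ is $Q_d^{\,n}$ while the top part of $Q\big(P(x_1),\dots,P(x_\nu)\big)$ is $Q_d(x_1^n,\dots,x_\nu^n)$, which yields the key identity
\[
Q_d(x_1,\dots,x_\nu)^n=Q_d(x_1^n,\dots,x_\nu^n).
\]

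The engine of the whole argument is the univariate observation that a polynomial $h\in\bC[y]$ with $h(0)\neq0$ and $h(y)^n=h(y^n)$ must be constant: if $k\ge1$ were the least index with a nonzero coefficient $a_k$, then $h(0)^{\,n-1}=1$ would force the coefficient of $y^k$ in $h(y)^n$ to be $na_k$, while $h(y^n)$ has no $y^k$-term (its exponents are multiples of $n$ and $nk>k$), so $na_k=0$, a contradiction. To apply this to the identity above, I would argue that if $Q_d$ is not a monomial then its Newton polytope has an edge $E$; choosing a linear functional $\ell$ maximized exactly on $E$ and taking $\ell$-leading parts of both sides gives $(Q_d)_E^{\,n}=(Q_d)_E(x_1^n,\dots,x_\nu^n)$ for the edge polynomial $(Q_d)_E$. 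Factoring out the monomial of a vertex of $E$ turns this into $h(y)^n=h(y^n)$ with $h(0)\neq0$ and $\deg h\ge1$, contradicting the observation. Hence $Q_d=c\,x^\alpha$ is a monomial with $c^{\,n-1}=1$. Moreover $d\ge2$, for if $d=1$ then $Q_d=c\,x_i$ and $Q=c\,x_i+(\text{const})\in\bC[x_i]$, contradicting \eqref{nondeg}.

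The hard part will be upgrading ``the leading form of $Q$ is a monomial'' to ``$P$ is affinely conjugate to $x^n$''. Restricting \eqref{main} to the diagonal $x_1=\cdots=x_\nu=x$ turns the right-hand side into $Q\big(P(x),\dots,P(x)\big)$, so $R(x):=Q(x,\dots,x)$ satisfies $P\big(R(x)\big)=R\big(P(x)\big)$, i.e.\ $R$ commutes with $P$, with $\deg R=d\ge2$ and leading coefficient $c$. By the theorem of Ritt and Julia recalled in the introduction, the commuting pair $(P,R)$ is of monomial type, of Chebyshev type, or shares a common iterate, and I would aim to exclude the last two so that $P$ is conjugate to a monomial, hence to $x^n$. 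The subtlety—and the step I expect to be the crux—is that the monomial leading form found above is consistent with \emph{any} monic $P$, so excluding the Chebyshev and common-iterate cases must use the lower-order content of \eqref{main}, and this is where \eqref{nondeg} should be indispensable. A clean reformulation of the target is that the diagonal $R$ is itself a monomial: indeed the same univariate computation shows that any polynomial commuting with a monomial $c\,x^d$ ($d\ge2$) is a monomial, whence $P=x^n$ directly.

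Finally I would conjugate once more by the $\lambda\in\Aff$ carrying $P$ to $x^n$; by the $\Aff$-invariance of $\eS_\nu$ the pair $\big(C_\lambda(P),C_\lambda(Q)\big)$ still solves \eqref{main}, and \eqref{nondeg} is preserved, since an affine change applied to every variable does not change which variables occur. With $P=x^n$ the functional equation collapses to the clean identity
\[
Q(x_1,\dots,x_\nu)^n=Q(x_1^n,\dots,x_\nu^n),
\]
now valid for the whole of $Q$. Repeating the edge argument of the second paragraph for \emph{every} edge of the Newton polytope of $Q$ (no positivity of $\ell$ is needed now, as there are no lower-order terms of $P$ to interfere) shows that this Newton polytope has no edge, hence is a single point, so $Q=c\,x^\alpha$ with $c^{\,n-1}=1$. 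This is the asserted conclusion, with $\alpha\neq0$ and, by \eqref{nondeg}, involving at least two variables.
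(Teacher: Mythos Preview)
Your Newton-polytope argument for the identity $H^n=H(x_1^n,\dots,x_\nu^n)$ is correct and would replace the paper's homogeneous-decomposition lemmas cleanly, but the proof has a genuine gap at the step you yourself flag as ``the crux'': you never establish that $P$ is affinely conjugate to $x^n$. Passing to the top homogeneous part $Q_d$ compares only the degree-$nd$ pieces of \eqref{main}, and there only the leading term of $P$ survives; the resulting identity $Q_d^n=Q_d(x_1^n,\dots,x_\nu^n)$ holds for \emph{every} monic $P$ and so cannot constrain $P$ further. Your fallback---restrict to the diagonal and invoke Ritt--Julia on $R(x)=Q(x,\dots,x)$---is left as a hope: you do not exclude the Chebyshev or common-iterate alternatives, and the ``clean reformulation'' that $R$ is itself a monomial is exactly what has not been shown (you only know $R$ has leading term $c\,x^d$, which is vacuous).

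The paper recovers information about $P$ through a different decomposition: write $Q=\sum_{i=0}^m Q_i(x_1,\dots,x_{\nu-1})\,x_\nu^{\,i}$ as a polynomial in the single variable $x_\nu$ with coefficients in $\bC[x_1,\dots,x_{\nu-1}]$, and compare top $x_\nu$-coefficients in \eqref{main}. With $P$ monic this gives
\[
Q_m(x_1,\dots,x_{\nu-1})^n \;=\; Q_m\big(P(x_1),\dots,P(x_{\nu-1})\big),
\]
an equation that still contains $P$ rather than merely $x^n$. For $\nu=2$ this is the one-variable identity $Q_m(y)^n=Q_m(P(y))$, and a direct analysis of the root set of $Q_m$ forces $P(y)=(y-r_0)^n+r_0$; for larger $\nu$ one inducts on this identity, condition \eqref{nondeg} ensuring that $Q_m$ is non-constant. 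The point you are missing is that grading by total degree discards the lower-order terms of $P$, whereas the univariate-in-$x_\nu$ decomposition retains them.
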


\begin{remark}\label{rem: main} (a) Observe  that if $(P,Q)\in \eS_\nu$  are nontrivial monomials $Q=cx^\alpha$   and  $P(x)=x^n$, then the condition $c^{n-1}=1$ is automatically  satisfied.

\smallskip

\noindent (b)   The condition (\ref{nondeg}) is necessary. Indeed , if $Q$ depended only on one variable, then   according to \cite{Ritt}  the   equation (\ref{main}) has nonmonomial solutions.\qed\end{remark}

The proof will be by induction.

\section{The two-variable case}

Let 
$$
P(x)=\sum_{k=0}^na_kx^k.
$$
We can view $Q(x,y)$ either as a polynomial in $\bC[y][x]$ or as a polynomial in $\bC[x][y]$.
In the first case we  can express $Q$ as
\[
Q(x,y)=\sum_{i=0}^mQ_i(y)x^i.
\]
and in the second case  we can express  $Q$ as 
\[
Q(x,y)=\sum_{i=0}^\ell R_i(x)y^i.
\]
\begin{lemma} Under the conditions of the main theorem \ref{th: main}, if $(P,Q)\in \eS_2$, then the leading coefficients $Q_m(y)$ and $R_\ell(x)$ are non-constant.
\end{lemma}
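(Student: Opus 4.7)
I argue by contradiction, assuming $Q_m(y)=c_m$ is a nonzero constant, and will deduce that $Q\in\bC[x]$, contradicting (\ref{nondeg}). Since (\ref{main}) is invariant under the swap $(x,y)\leftrightarrow(y,x)$ (which replaces $(P,Q)$ by $(P,Q(y,x))\in\eS_2$), the same reasoning applied to $Q(y,x)$ yields the analogous statement for $R_\ell(x)$; so it suffices to handle $Q_m$. By Lemma \ref{lemma: 11} I may assume $P$ is monic.

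Matching the $x^{nm}$-coefficients on both sides of (\ref{main}) gives the identity $Q_m(y)^n=Q_m(P(y))$, which under our hypothesis becomes $c_m^{n-1}=1$. In the base case $m=1$, the equation reads $P(c_1x+Q_0(y))=c_1P(x)+Q_0(P(y))$; differentiating in $x$ gives $P'(c_1x+Q_0(y))=P'(x)$. Condition (\ref{nondeg}) forces $Q_0$ to be non-constant, so choosing $y_1,y_2$ with $Q_0(y_1)\neq Q_0(y_2)$ exhibits a nonzero period of $P'$, hence $P'$ is a constant polynomial and $\deg P\leq 1$, a contradiction.

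When $m\geq 2$, the plan is to prove by downward induction on $j\in\{m-1,m-2,\dotsc,0\}$ that each $Q_j(y)$ is constant; once this is established, $Q\in\bC[x]$, which contradicts (\ref{nondeg}). The inductive step at $j$ comes from comparing the coefficients of $x^{(n-1)m+j}$ on the two sides of (\ref{main}). In the multinomial expansion of $Q^n$, the monomials of $x$-degree $(n-1)m+j$ correspond to tuples $(i_1,\dotsc,i_n)\in\{0,1,\dotsc,m\}^n$ with total deficit $\sum(m-i_k)=m-j$. Since $Q_{j+1},\dotsc,Q_m$ are constants by the inductive hypothesis and $Q_m=c_m$, the only way $Q_j(y)$ can appear is through exactly one factor $Q_j(y)x^j$, the remaining $n-1$ factors each contributing $c_mx^m$; a higher power of $Q_j(y)$ would demand deficit $\geq 2(m-j)>m-j$. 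This produces the contribution $n c_m^{n-1}Q_j(y)$. For $j\geq 1$ no $a_kQ^k$ with $k<n$ contributes to this $x$-degree, while for $j=0$ only the leading $a_{n-1}c_m^{n-1}x^{(n-1)m}$-term of $a_{n-1}Q^{n-1}$ adds a constant. On the right side, $Q_j(P(y))P(x)^j$ has $x$-degree $nj<(n-1)m+j$ (since $j<m$), so contributes nothing; every remaining summand $Q_i(P(y))P(x)^i$, $i>j$, carries only the constant $c_i$. Equating the two coefficients thus forces $Q_j(y)$ to be constant, completing the inductive step.

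The main obstacle I anticipate is the multinomial bookkeeping for $Q^n$: one must verify that the chosen $x$-degree $(n-1)m+j$ really isolates $Q_j(y)$ to first order and excludes cross-products with the lower-indexed coefficients $Q_{j'}(y)$, $j'<j$, which have not yet been proven constant. The deficit inequality $m-j'>m-j$ resolves this, preventing any such $Q_{j'}$-factor from fitting within the available deficit budget $m-j$.
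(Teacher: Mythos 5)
Your argument is correct, but it follows a different route from the paper's. The paper splits $Q(x,y)=f(x)+g(y)+xyq(x,y)$ and makes a single extremal comparison: assuming the top coefficient $Q_m$ is constant, the highest power of $x$ carrying a $y$-dependent coefficient is $Nn-a$ on the left of \eqref{main} but only $n(N-a)=nN-na$ on the right, and $nN-na<nN-a$ for $n>1$, $a>0$ (with a separate sub-case $q\equiv 0$ killed by the cross-terms that $P(f(x)+g(y))$ produces). You instead run a downward induction on the coefficients $Q_{m-1},Q_{m-2},\dotsc,Q_0$, isolating $Q_j$ in the coefficient of $x^{(n-1)m+j}$ via the deficit bookkeeping in the multinomial expansion of $Q^n$, and concluding that every $Q_j$ is constant, so $Q\in\bC[x]$, contradicting \eqref{nondeg}; the inequality $nj<(n-1)m+j$ for $j<m$ plays the role of the paper's $nN-na<nN-a$. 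Your version is longer and requires the separate base case $m=1$ (which, incidentally, your general induction also covers, so the periodicity argument for $P'$ is not strictly needed), but it is more systematic: it proves the stronger conclusion that all coefficients of $Q$ in $x$ are constant, it makes the exclusion of cross-products $Q_{j'}Q_{j''}$ fully explicit, and it avoids the paper's slightly informal handling of the decomposition $f+g+xyq$. The paper's version is shorter because it only needs to find one $x$-degree where the $y$-dependence on the two sides cannot match. Both proofs correctly reduce the $R_\ell$ statement to the $Q_m$ statement by the symmetry $(x,y)\leftrightarrow(y,x)$, and your normalization of $P$ to be monic via Lemma \ref{lemma: 11} is legitimate since the $\Aff$-action preserves $\eS_2$, the condition \eqref{nondeg}, and the constancy of $Q_m$.
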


\begin{proof}  We argue  by contradiction.  In this  case $Q(x,y)$ can be expressed as 
\[
Q(x,y)=f(x)+g(y)+ xyq(x,y),
\]
 where the highest power of $x$ in $q(x,y)$ is less than $\deg(f)-1$, and the highest power of $y$ in $q(x,y)$ is less than $\deg(g)-1$. Moreover $\min(\deg(f),\deg(g))>1$. 
 
 The commutation equation (\ref{main}) can be rewritten as 
 $$
 P(f(x)+g(y)+xyq(x,y))=f(P(x))+g(P(y))+P(x)P(y)q(P(x),P(y)).
 $$
By the statement of the theorem \ref{th: main}, either $g(y)\neq0$ or $q(x,y)\neq0$. Suppose first that $q\neq 0$. If $\deg(f)=N$, the term with the highest power of $x$ in $xyq(x,y)$ is of the form $q_1(y)x^{N-a},a>0$.

The highest power of $x$ with a non-constant coefficient in $P\lp f(x)+g(y)+xyq(x,y)\rp $ is $N(n-1)+N-a=Nn-a$, $n=\deg P$.

However, only  $P(x)P(y)q(P(x),P(y))\in \bC[y][x]$   has  coefficients  that are nonconstant as functions of $y$. The highest $p$ such that the coefficient $C_p[y]$ of $x^p$ is  non-constant coefficients $n(N-a)=nN-na$. Since $n>1$, this is impossible.

 If $q(x,y)=0$, then  (\ref{main}) reads
 $$
 P(f(x)+g(y))=f(P(x))+g(P(y)).
 $$ 
 This is not  solvable because $\deg P>1$. Indeed  $P(f(x)+g(y))$ contains terms divisible by $xy$, and $f(P(x))+g(P(y))$ does not. 

 An analogous proof works for powers of $y$.
\end{proof}

The equality $P(Q(x,y))=Q(P(x),P(y))$ can be rewritten as 
$$
\sum_{k=0}^na_k\Lp \sum_{i=0}^mQ_i(y)x^i\Rp^k=\sum_{i=0}^mQ_i(P(y))P(x)^i.
$$
Equating the terms with the highest power of $x$ we deduce 
\begin{equation}\label{1}
Q_m(y)^n=a_n^{m-1}Q_m(P(y)).
\end{equation}

Denote by $\eR$ the  set roots of $Q_m(y)$. For $r\in \eR$ we denote by $m(r)$ the multiplicity of $r$. Then 
$$
Q_m(y)=q_m\prod_{r\in\eR}(y-r)^{m(r)}, \;\;q_m\neq 0,
$$
and (\ref{1}) implies that
\begin{equation}\label{2}
 q_m^n\prod_{r\in\eR}(y-r)^{nm(r)}=a_n^{m-1}\underbrace{\prod_{r\in\eR}\lp P(y)-r\rp^{m(r)}}_{R(y)}.
\end{equation}
\begin{lemma} The polynomial  $Q_m$ has a single root of multiplicity  $m$, i.e., $|\eR|=1$.  Moreover, if $r_0$ is this unique root of $Q_m$, then $P(y)=(y-r_0)^n+r_0$. 
\end{lemma}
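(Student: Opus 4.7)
The plan is to read equation (\ref{2}) as an equality of polynomials and extract the consequences of matching roots and multiplicities on both sides.

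First I would identify the multiset of roots of each side. The left-hand side vanishes to order $nm(r)$ at each $r \in \eR$ and nowhere else, while the right-hand side vanishes precisely at the points of $P^{-1}(\eR)$. Forcing these two zero sets to coincide yields the set-theoretic containments $P^{-1}(\eR) \subseteq \eR$ and $P(\eR) \subseteq \eR$. Hence $P$ restricts to a self-map of the finite set $\eR$, and because every $r \in \eR$ has at least one preimage under $P$ that must also lie in $\eR$, this self-map is surjective and therefore bijective.

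Next I would exploit the bijectivity of $P|_{\eR}$ to pin down the shape of each factor $P(y)-r$. All roots of $P(y)-r$ lie in $\eR$, but only one element $s\in\eR$ satisfies $P(s)=r$, namely the $(P|_{\eR})$-preimage of $r$. Since $P(y)-r$ has degree $n$, all $n$ of its roots must coincide with $s$, so
\[
P(y) - r = a_n (y-s)^n,
\]
and equivalently $P(y) = a_n(y-s)^n + P(s)$ for every $s\in\eR$.

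The final step is a coefficient comparison. The coefficient of $y^{n-1}$ in $a_n(y-s)^n + P(s)$ equals $-na_n s$, which determines $s$ uniquely from $P$. Hence $\eR$ must consist of a single point, $\eR=\{r_0\}$. The inclusion $P(\eR)\subseteq\eR$ then forces $P(r_0) = r_0$, so $P(y) = a_n(y - r_0)^n + r_0$. After a preliminary normalization $a_n = 1$, justified by Lemma \ref{lemma: 11} together with the affine invariance of $\eS_\nu$ noted earlier, this is precisely the claim. The main obstacle I expect is the passage from the polynomial identity (\ref{2}) to the set-theoretic description of roots and the bijectivity of $P|_{\eR}$: the multiplicity bookkeeping has to be arranged so that ``every root of $P(y)-r$ lies in $\eR$'' combined with ``$P|_\eR$ is a bijection'' leaves no loophole. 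Once that is in hand, the final coefficient comparison forcing $|\eR|=1$ is immediate.
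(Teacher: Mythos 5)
Your proposal is correct and follows essentially the same route as the paper: establish that $P$ restricts to a bijection of the finite root set $\eR$, use the resulting unique preimage to force $P(y)-r$ to be a perfect $n$-th power $a_n(y-s)^n$, and compare the $y^{n-1}$ coefficients to conclude $|\eR|=1$. Your explicit remark that the normalization $a_n=1$ (via Lemma \ref{lemma: 11} and affine invariance) is needed for the final form $P(y)=(y-r_0)^n+r_0$ is a welcome touch of care that the paper leaves implicit.
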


\begin{proof}   The equality (\ref{2}) shows that there exists a map $\phi:\eR\to\eR$ such that
\begin{equation}\label{3}
P(r)= \phi(r),\;\;\forall r\in \eR.
\end{equation}
On the other hand, for any $r\in\eR$, the equality (\ref{2}) implies that the nonempty set $P^{-1}(r)$ is contained in $\eR$. Indeed, if $P(y_0)=r$, then $R(y_0)=0$ so $Q_m(y_0)=0$ and thus $y_0\in \eR$. Hence  $\phi$ is surjective. Since $\eR$ is finite, $\phi$ is also bijective.

We claim  that for any $r\in \eR$ we have  $P^{-1}(r)=\{\phi^{-1}(r)\}$.  Set $r'=\phi^{-1}(r)$. The equality (\ref{3}) shows that  $\phi^{-1}(r)\in P^{-1}(r)$. Conversely,  for any $r'' \in P^{-1}(r)$ we have $P(r'')=r=\phi(r'')$ and since $\phi$ is bijective we deduce $r''=\phi^{-1}(r)=r'$.

We conclude that for any $r\in \eR$ we have
\begin{equation}\label{4}
P(y)=\lp y-\phi^{-1}(r)\rp^n+r.
\end{equation}
Let  $r_1,r_2\in \eR$. Set $s_j:=\phi^{-1}(r_j)$, $j=1,2$. Then
\[
(y-s_1)^n+r_1=P(y)=(y-s_2)^n+r_2,\;\;\forall y\in \bC.
\]
The Newton binomial formula implies that
\[
y^n-ns_1y^{n-1}+\cdots +(-1)^n s_1^n+r_1=  y^n-ns_2y^{n-1}+\cdots +(-1)^n s_2^n+r_2.
\]
We have $n-1\geq 1$. Equating the coefficients of  $y^{n-1}$  in the above equality we deduce $s_1=s_2$.   Since $\phi$ is bijective we conclude that $r_1=r_2$  so that $\eR$ consists of a single root $r_0$. The equality (\ref{4}) shows that    $P(y)=(y-r_0)^n+r_0$.\end{proof}

 The above discussion shows that  if $(P, Q)\in \eS_2$, and $\deg P>1$ then $P$ is  affinely conjugate to  the monomial $x^n$. The equation (\ref{main}) becomes
 \begin{equation}\label{main1}
 Q( x^n, y^n)= Q(x,y)^n. 
 \end{equation}
 To tackle the two-variable case, we must first learn how to solve the much simpler one-variable case
$$
Q(x)^n= c^n\prod_{k=1}^p(x-x_k)^n=Q( x^n)=c\prod_{k=1}^p( x^n-x_k).
$$ 
For $n>1$, the polynomials 
$$
\prod_{k=1}^p(x-x_k)^n, \prod_{k=1}^p( x^n-x_k)
$$  
must have the same roots. Each non-zero root on the right increases the difference between the number of roots on the right and left by $n-1$. Since $n>1$, all the roots must be zero, and $Q(x)=cx^p$, where $c$ is some constant.

\begin{corollary}\label{cor: 23} If $H(x,y)$ is homogeneous of degree $p$, then $H(x,y)=y^pH(\nu,1)$ where $z=\frac{x}{y}$. So if $ H(x,y)^n=H( x^n, y^n)$, then $H(z,1)=cz^q$, and $H(x,y)=cx^qy^{p-q}$, $c^{n-1}=1$. \qed
\end{corollary}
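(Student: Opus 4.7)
The plan is to reduce the two-variable claim directly to the one-variable case handled in the paragraph preceding the corollary. Any polynomial $H(x,y)$ homogeneous of degree $p$ can be dehomogenized as $H(x,y) = y^p h(z)$, where $z = x/y$ and $h(z) := H(z,1)$ is a univariate polynomial of degree at most $p$.

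Substituting this representation into the given identity $H(x,y)^n = H(x^n, y^n)$, I would compute both sides. The left side becomes $y^{np} h(z)^n$. For the right side, homogeneity of degree $p$ gives $H(x^n, y^n) = (y^n)^p h(x^n/y^n) = y^{np} h(z^n)$. Canceling the common factor $y^{np}$ reduces the problem to the univariate functional equation $h(z)^n = h(z^n)$.

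Now I would invoke the one-variable analysis carried out just before the corollary, which shows that the only nonzero solutions of $h(z)^n = h(z^n)$ for $n > 1$ are of the form $h(z) = c z^q$ with $q \in \bZ_{\geq 0}$. Equating leading coefficients on the two sides of $h(z)^n = h(z^n)$ then forces $c^n = c$, i.e.\ $c^{n-1} = 1$. Substituting back yields $H(x,y) = y^p \cdot c (x/y)^q = c x^q y^{p-q}$, which is exactly the assertion. Since $h$ has degree at most $p$, one automatically has $q \leq p$, so the exponent $p-q$ is a nonnegative integer.

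There is no real obstacle here: the only content is the observation that, under the substitution $z = x/y$, a homogeneous identity in two variables becomes a one-variable identity in $z$, and that identity has already been resolved in the text immediately preceding the corollary.
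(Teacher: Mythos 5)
Your proposal is correct and follows essentially the same route the paper intends: dehomogenize via $z=x/y$, cancel $y^{np}$ to reduce to the univariate identity $h(z)^n=h(z^n)$ already solved in the preceding paragraph, and read off $c^{n-1}=1$ from the leading coefficients. The paper gives this corollary only as a statement with no written proof, so your filled-in details (including the explicit derivation of $c^n=c$, which the one-variable discussion does not spell out) are exactly what is needed.
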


We can now complete  the proof of Theorem \ref{th: main} in the two-variable case, $\nu=2$. We write
\[
Q(x,y)= \sum_{k=0}^m H_k(x,y),\;\;m\geq 1,
\]
 where $H_k(x,y)$ is a homogeneous polynomials of degree  $k$. 
\begin{lemma}\label{lemma: 24}
    If $H_0\neq0$, then the only solution to $(\ref{main1})$ is $Q(x,y)\equiv H_0.$ 
\end{lemma}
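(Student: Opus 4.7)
The plan is to compare homogeneous components on both sides of (\ref{main1}). Writing $Q = H_0 + H_1 + \cdots + H_m$, the left side $Q(x^n, y^n) = H_0 + H_1(x^n,y^n) + \cdots + H_m(x^n,y^n)$ has a nontrivial homogeneous component in degree $d$ only when $d = nj$ for some $0 \leq j \leq m$, in which case that component equals $H_j(x^n, y^n)$. On the right side, the degree-$d$ part of $Q(x,y)^n$ is, by the multinomial theorem, the sum of products $H_{k_1} \cdots H_{k_n}$ over tuples $(k_1, \ldots, k_n) \in \{0, 1, \ldots, m\}^n$ with $k_1 + \cdots + k_n = d$. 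Comparing degree $0$ immediately yields $H_0^n = H_0$, so $H_0^{n-1} = 1$; in particular $H_0$ is invertible.

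The main step is to prove by induction on $k \geq 1$ that $H_k = 0$. Assuming $H_1 = \cdots = H_{k-1} = 0$, in the expansion of $Q(x,y)^n = (H_0 + H_k + H_{k+1} + \cdots + H_m)^n$ the only tuple with entries in $\{0\} \cup \{k, k+1, \ldots, m\}$ whose sum equals $k$ has exactly one coordinate equal to $k$ and the rest equal to $0$; this contributes $n H_0^{n-1} H_k = n H_k$. Meanwhile the degree-$k$ component of $Q(x^n, y^n)$ is either $0$ (when $n \nmid k$) or $H_{k/n}(x^n, y^n)$ (when $n \mid k$). In the second case, set $j = k/n$; then $1 \leq j < k$ because $n \geq 2$, so the inductive hypothesis gives $H_j = 0$ and the component vanishes. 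In all cases $n H_k = 0$, so $H_k = 0$. Iterating yields $H_k = 0$ for every $k \geq 1$, hence $Q \equiv H_0$.

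The argument is essentially routine once framed this way. The only subtlety is that the ``divisible by $n$'' branch of the induction must be handled by appealing to the inductive hypothesis at the strictly smaller index $j = k/n$, and this is precisely where the hypothesis $n = \deg P \geq 2$ is used. No deeper structural result such as Corollary \ref{cor: 23} is needed for this lemma.
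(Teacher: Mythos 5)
Your proof is correct and follows essentially the same route as the paper: decompose $Q$ into homogeneous components, match degrees on both sides of (\ref{main1}), and induct upward to kill every $H_k$ with $k\geq 1$ using $\overline{H_k}=nH_0^{n-1}H_k$. Your handling of the case $n\mid k$ (reducing to the strictly smaller index $k/n$ via the inductive hypothesis) is in fact a cleaner formulation of the step the paper treats somewhat informally.
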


\begin{proof}
    The sum $\left(\sum_{k=0}^m H_k\right)^n$ can be written as $\sum_{k=0}^{mn}\overline{H_k}$, where $\overline{H_k}$ are homogeneous polynomials of degree $k$ that are sums of products of $n$ $H_i$-s, where $0\leq i\leq k.$ Then (\ref{main1}) becomes 
    \begin{equation}\label{7}
        \sum_{k=0}^{mn}\overline{H_k(x,y)}=\sum_{k=0}^mH_k(x^n,y^n). 
    \end{equation}
    
    Since $H_k(x^n,y^n)$ are homogeneous polynomials of degree $nk$, it must be true that $\overline{H_{nj}(x,y)}=H_j(x^n,y^n)$ and $\overline{H_k}(x,y)=0$  if $k$ is not a multiple of $n$. 
    
  We have $\overline{H_1}=nH_0^{n-1}H_1=0$, therefore $H_1=0$. Notice that the right hand side of (\ref{7}) changes as well, since now the homogeneous polynomial of the smallest positive degree is $H_2(x^n,y^n)$. Therefore, $\overline{H_k}\equiv0$ for $1< k<2n$. In general, if $H_k\equiv0$ for all $0<k\leq p$, then $\overline{H_k}\equiv0$ for $0<k<(n+1)p$. Since $n>1, p>1$ and $1<p+1<2p$, $\overline{H_p}\equiv0$ implies $\overline{H_{p+1}}\equiv 0$. Suppose that we have proven that $H_k\equiv 0$ for $k=p$. To prove it for $k=p+1$, notice $\overline{H_{p+1}}\equiv0$, and the expression for $\overline{H_{p+1}}$ involves only $H_k$ or degree $\leq p+1$. However, all $H_k, 1\leq k\leq p$ are null, so $\overline{H_{p+1}}=nH_0^{n-1}H_{p+1}\equiv 0$, therefore $H_{p+1}\equiv 0$.

\end{proof}

\begin{lemma}\label{lemma: 25}
    $Q(x,y)=H_m(x,y)$.
\end{lemma}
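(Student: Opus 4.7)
The plan is to combine Lemma 2.4 with a matching of lowest-degree homogeneous components of both sides of (\ref{main1}). Since $\deg Q\ge 1$, Lemma 2.4 rules out $H_0\ne 0$, so we may assume $H_0=0$. Let $k_0$ be the smallest index for which $H_{k_0}\ne 0$; then $1\le k_0\le m$, and the whole aim is to prove $k_0=m$.

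First I would compare the homogeneous components of smallest degree on both sides of the identity $Q(x,y)^n=Q(x^n,y^n)$. On the left the smallest-degree term is $H_{k_0}^n$ (degree $nk_0$) and on the right it is $H_{k_0}(x^n,y^n)$ (also degree $nk_0$), so the two coincide. Corollary \ref{cor: 23} then shows that $H_{k_0}$ is a scalar multiple of a monomial $x^{q}y^{k_0-q}$; in particular $H_{k_0}$ is not a zero divisor in $\bC[x,y]$.

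Next, suppose for contradiction that $k_0<m$, and let $k_1$ be the second-smallest index with $H_{k_1}\ne 0$, so $k_0<k_1\le m$. I would pick the auxiliary degree
\[
d=(n-1)k_0+k_1,
\]
which satisfies $nk_0<d<nk_1$ (the second inequality because $n>1$). On the right, every nonzero homogeneous component of $Q(x^n,y^n)$ has degree $nk_i$ for some index with $H_{k_i}\ne 0$; since $nk_0<d<nk_1$, the degree-$d$ component of the right-hand side vanishes. On the left, any monomial contributing to the degree-$d$ component of $Q^n$ comes from a product $H_{i_1}\cdots H_{i_n}$ with each $i_\ell$ in the set of active indices $\{k_0,k_1,k_2,\dotsc\}$ and $i_1+\cdots+i_n=d$. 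A short bookkeeping argument shows that if more than one of the $i_\ell$ exceeds $k_0$, then the total exceeds $(n-2)k_0+2k_1>d$; hence exactly one factor equals $H_{k_1}$ and the remaining $n-1$ factors equal $H_{k_0}$. Therefore the degree-$d$ component of $Q^n$ is exactly $nH_{k_0}^{n-1}H_{k_1}$, and setting this to zero together with $H_{k_0}\ne 0$ forces $H_{k_1}=0$, a contradiction. So $k_0=m$ and $Q=H_m$.

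The main obstacle is the combinatorial step ensuring that $d=(n-1)k_0+k_1$ admits only the one decomposition above as an ordered sum of active indices; once this is established, the argument reduces to the nonvanishing of $H_{k_0}$ supplied by Corollary \ref{cor: 23}. Everything else is a routine degree comparison mirroring the technique of Lemma \ref{lemma: 24}.
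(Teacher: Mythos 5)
Your proof is correct and relies on essentially the same mechanism as the paper's: the degree-$\bigl((n-1)k_0+k_1\bigr)$ component of $Q^n$ reduces to the single cross term $nH_{k_0}^{n-1}H_{k_1}$, while the corresponding component of $Q(x^n,y^n)$ vanishes because that degree lies strictly between $nk_0$ and $nk_1$, forcing $H_{k_1}=0$. The paper packages this as an induction that peels off the lowest components one at a time by re-running the argument of Lemma~\ref{lemma: 24} with shifted indices, whereas you run it as a single contradiction at one well-chosen degree (a slightly cleaner organization); note also that your appeal to Corollary~\ref{cor: 23} to see that $H_{k_0}$ is not a zero divisor is superfluous, since any nonzero element of $\bC[x,y]$ already has that property.
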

\begin{proof}
    Since $H_0=0$, $\overline{H_k}\equiv0$ for $k<n$ and $\overline{H_n}=H_1^n(x,y)=H_1(x^n,y^n)$. Denoting $H_k$ as $H'_{k-1}$ we get that unless $H'_0=H_1\equiv0$, every term after it must vanish by the exact same proof as in Lemma \ref{lemma: 24}, but with $H$-s replaced with $H'$-s. Therefore, $H_1\equiv0$. Continuing by induction, we obtain that every term in $\sum_kH_k$ other than the highest order one must vanish. 
\end{proof}

Since $Q(x,y)=H_m(x,y)$ is a homogeneous polynomial, it must be conjugate to a monomial to satisfy (\ref{main1}) by Corollary \ref{cor: 23}.

\section{The inductive step}

We  first  solve (\ref{main1}) in an arbitrary number of variables.

\begin{lemma}\label{lemma: 3.1}  Given that $Q(x_1,...,x_\nu )$ is a polynomial in $\nu$, $\deg Q\geq 1$ variables and $n$ is a natural number greater than 1, the equation $Q( x_1^n,..., x_\nu^n)= Q(x_1,...,x_\nu)^n$ can only be satisfied if $Q$ is a monomial.
\end{lemma}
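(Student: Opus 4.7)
The plan is to induct on the number of variables $\nu$. The base case $\nu = 1$ is exactly the one-variable identity $Q(x)^n = Q(x^n)$, already handled in the paragraph preceding Corollary \ref{cor: 23}: comparing roots on the two sides forces $Q(x) = cx^p$.

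For the inductive step, assume the lemma for $\nu - 1$ variables and consider $Q \in \bC[x_1,\dotsc,x_\nu]$ satisfying $Q(x_1^n,\dotsc,x_\nu^n) = Q(x_1,\dotsc,x_\nu)^n$ with $\deg Q \geq 1$. I would first show that $Q$ is homogeneous by repeating the arguments of Lemmas \ref{lemma: 24} and \ref{lemma: 25}. Decompose $Q = \sum_{k=0}^m H_k$ into pieces $H_k$ homogeneous of total degree $k$, and expand $Q^n = \sum_k \overline{H_k}$ with $\overline{H_k}$ homogeneous of degree $k$. Since $Q(x_1^n,\dotsc,x_\nu^n)$ only contains homogeneous pieces whose degrees are multiples of $n$, we must have $\overline{H_k} \equiv 0$ whenever $n \nmid k$ and $\overline{H_{nj}} = H_j(x_1^n,\dotsc,x_\nu^n)$. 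The chain of vanishings used for $\nu = 2$ — the identity $\overline{H_1} = n H_0^{n-1} H_1$ forces $H_1 \equiv 0$ when $H_0 \neq 0$, and the induction propagates upward; once $H_0 = 0$ is known, the parallel induction climbs from the bottom — depends only on the total-degree grading and transfers verbatim. Since $\deg Q \geq 1$, we conclude $Q = H_m$ is homogeneous.

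The second step is dehomogenization. Write
\[
H_m(x_1,\dotsc,x_\nu) = x_\nu^m\, G(x_1/x_\nu,\dotsc,x_{\nu-1}/x_\nu), \qquad G(y_1,\dotsc,y_{\nu-1}) := H_m(y_1,\dotsc,y_{\nu-1},1).
\]
Substituting into the functional equation and dividing by $x_\nu^{nm}$ yields $G(y_1^n,\dotsc,y_{\nu-1}^n) = G(y_1,\dotsc,y_{\nu-1})^n$. If $\deg G \geq 1$, the inductive hypothesis makes $G$ a monomial in $y_1,\dotsc,y_{\nu-1}$; if $\deg G = 0$, then $G$ is a constant $c$ with $c^n = c$. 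In either case $H_m$ is a monomial in $x_1,\dotsc,x_\nu$.

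The main obstacle — and it is minor — is confirming that the vanishing arguments from Lemmas \ref{lemma: 24} and \ref{lemma: 25} really depend only on the total-degree grading of $\bC[x_1,\dotsc,x_\nu]$, so that nothing about $\nu$ intervenes. This is true because the expressions for $\overline{H_k}$ in terms of the $H_j$ are purely formal consequences of expanding a power of a sum, and the recurrences $\overline{H_k} = n H_0^{n-1} H_k + (\text{terms involving only } H_0,\dotsc,H_{k-1})$ are insensitive to how many variables the $H_j$ depend on. Once this is checked, the dehomogenization step and the application of the inductive hypothesis are routine.
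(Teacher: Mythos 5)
Your proof is correct and follows essentially the same route as the paper: reduce to the homogeneous case by transferring the total-degree grading arguments of Lemmas \ref{lemma: 24} and \ref{lemma: 25} to $\nu$ variables, then dehomogenize with respect to $x_\nu$ and apply the inductive hypothesis in one fewer variable. The only differences are cosmetic — you start the induction at $\nu=1$ rather than $\nu=2$, and you explicitly handle the degenerate case $\deg G=0$ (where $H_m$ is a power of $x_\nu$ alone), which the paper's appeal to the inductive hypothesis silently passes over.
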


\begin{proof}The case where $\nu=2$  was  discussed previously. Suppose that this has been proven for polynomials  $Q$ in $\nu$ variables.  To prove it for polynomials in $k=\nu+1$ variables, we  employ the homogeneous decomposition from the previous section. Since the number of variables does not affect the arguments in the proofs of Lemmas \ref{lemma: 24} and \ref{lemma: 25}, we deduce that $Q$  must be a homogeneous polynomial of degree $p$, $Q=H_{p}$, where $H_p$ is a homogeneous polynomial of degree $p>0$. Then, the equation from the statement takes the form 
$$
H_p( x_1^n,...,x_{m+1}^n)= H_p(x_1,...,x_{\nu+1})^n.
$$ 
Since $H_p$ is homogeneous of degree $p$, 
$$
H_p(x_1,...,x_{m+1})=x_{\nu+1}^pH_p(y_1,...,y_\nu,1), y_i=\frac{x_i}{x_{\nu+1}}.
$$
Since $H_p(y_1,...,y_\nu, 1)$ is a polynomial in $\nu$ variables, it must be a monomial by the inductive hypothesis. Thus, $Q$ must also be a monomial.\end{proof}

In other words  if $(P,Q)\in \eS_{\nu+1}$, $Q\neq 0$ and $P$ is a monomial, the $Q$ must also be a monomial. In view of Remark \ref{rem: main}  to prove  Theorem \ref{th: main} it suffices  to show that if $(P,Q)\in \eS_\nu$ then $P$ is a monomial.

\begin{lemma}\label{lemma: 3.2}
    Under the conditions of Theorem \ref{th: main}, for all $i\in\{1,\dotsc,\nu\}$ the highest power of $x_i$ has a non-constant coefficient. 
\end{lemma}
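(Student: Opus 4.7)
The plan is to generalize the two-variable ``leading-coefficient is non-constant'' argument used earlier, now with the complementary variables $x_2,\dotsc,x_\nu$ playing the role that $y$ played before. By the symmetry of the hypothesis \eqref{nondeg} it suffices to treat $i=1$, so I assume toward a contradiction that when $Q$ is written as
\[
Q(x_1,\dotsc,x_\nu)=\sum_{i=0}^m Q_i(x_2,\dotsc,x_\nu)\,x_1^i,\qquad Q_m\neq 0,
\]
the top coefficient $Q_m$ is a constant $c_m\in\bC^\ast$. Since \eqref{nondeg} forces $Q\notin\bC[x_1]$, some $Q_i$ with $i<m$ is non-constant in $\bC[x_2,\dotsc,x_\nu]$; let $N<m$ be the largest such index.

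The main step is to compute, on both sides of \eqref{main}, the largest exponent $k$ such that the coefficient of $x_1^k$ (viewed as an element of $\bC[x_2,\dotsc,x_\nu]$) is non-constant, and to show these two numbers must differ. On the left side $P(Q)=\sum_k a_k Q^k$, I will split $Q=P_c(x_1)+\tilde Q$ where $P_c$ collects the terms with $i>N$ (which have constant coefficients in $x_1$) and $\tilde Q$ has $\deg_{x_1}\tilde Q=N$ with leading coefficient $Q_N$. Expanding $Q^k$ by the binomial theorem, the non-constant-in-$(x_2,\dotsc,x_\nu)$ contributions come from terms with at least one factor of $\tilde Q$, and the unique highest $x_1$-power among these appears in $nP_c^{k-1}\tilde Q$ at degree $(k-1)m+N$; no cancellation is possible because the next $j=2$ contribution lives in strictly lower $x_1$-degree. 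Summing over $k\le n$, the unique maximum is at $k=n$, giving highest non-constant power $(n-1)m+N$ with coefficient proportional to $Q_N$.

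On the right side $Q(P(x_1),\dotsc,P(x_\nu))=\sum_i Q_i(P(x_2),\dotsc,P(x_\nu))\,P(x_1)^i$. Since $P(x_1)^i\in\bC[x_1]$, each coefficient of $x_1^k$ is a $\bC$-linear combination of the $Q_i(P(x_2),\dotsc,P(x_\nu))$, so is non-constant only if some $Q_i$ with $i\le N$ and $in\ge k$ contributes with non-zero $\bC$-coefficient. The top such $k$ is therefore $Nn$, achieved at $i=N$ with coefficient $a_n^N\,Q_N(P(x_2),\dotsc,P(x_\nu))$, which is genuinely non-constant because $P$ has positive degree. Equating the two top exponents yields $(n-1)m+N=Nn$, hence $m=N$, contradicting $N<m$.

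The main technical worry is ensuring that the ``top non-constant coefficient'' on each side is not accidentally killed by cancellation, which is why I intend to track which monomials in $x_1$ contribute and to verify that on the left the $j=1$ binomial term is strictly highest, and on the right that $Q_N(P(x_2),\dotsc,P(x_\nu))$ is still non-constant. The degenerate sub-case $N=0$ (meaning $Q=P_c(x_1)+Q_0(x_2,\dotsc,x_\nu)$) must be handled separately but is easier: the right side then has no non-constant-in-$(x_2,\dotsc,x_\nu)$ coefficient of any positive $x_1$-power, while the left still has one at $x_1^{(n-1)m}$, again a contradiction because $(n-1)m\geq 1$.
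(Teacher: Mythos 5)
Your proposal is correct and follows essentially the same strategy as the paper: assume the top coefficient in $x_1$ is constant, locate the highest $x_1$-exponent carrying a non-constant coefficient on each side of \eqref{main}, and compare — your $(n-1)m+N$ versus $Nn$ is exactly the paper's $N\deg(P)-a$ versus $(N-a)\deg(P)$ under the dictionary $m\leftrightarrow N_{\text{paper}}$, $N\leftrightarrow N_{\text{paper}}-a$, and your degenerate case $N=0$ is the paper's $q=0$ case (no mixed terms), which the paper dispatches via the cross-terms $x_\nu x_i$ and you dispatch by the same exponent comparison. The only blemish is the typo $nP_c^{k-1}\tilde Q$ for the binomial term $\binom{k}{1}P_c^{k-1}\tilde Q$, which does not affect the degree count.
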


\begin{proof}
    Express $Q(x_1,...,x_\nu)$ as $f(x_\nu)+g(x_1,...,x_{\nu-1})+x_\nu q(x_1,...,x_\nu)$, where $\deg(f)=N$ and the maximum power of $x_\nu$ in $x_\nu q(x_1,...,x_\nu)$ is equal to $N-a, a>0$. The commutation relation can be written as $$P(f(x_\nu)+g(x_1,...,x_{\nu-1})+x_\nu q(x_1,...,x_\nu))=$$$$=f(P(x_\nu))+g(P(x_1),...,P(x_{\nu-1}))+P(x_\nu)q(P(x_1),...,P(x_\nu)).$$ By the conditions of the main theorem \ref{th: main}, either $g\neq0$ or $q\neq 0$. If $q\neq0$, this equation is not solvable, since the maximum power of $x_\nu$ with non-constant coefficients on the left hand side is $N(\deg(P)-1)+N-a=N\deg(P)-a$, while on the right hand side it is just $(N-a)\deg(P)<N\deg(P)-a$. If $q=0$, the equation is still not solvable since $\deg(P)>1$, so $P(f(x_\nu)+g(x_1,...,x_{\nu-1}))$ contains terms divisible by $x_\nu x_i$, $i\in\{1,\dotsc,\nu-1\}$, while $f(P(x_\nu))+g(P(x_1),\dotsc,P(x_{\nu-1}))$ does not. The exact same argument holds for the rest of $x_i$-s.  
\end{proof}

Let

$$
P(x)=\sum_{k=0}^na_kx^k.
$$

Let
\[
Q(x_1,\dotsc,x_\nu)=\sum_{i=0}^mQ_i(x_1,\dotsc,x_{\nu-1})x_\nu^i.
\]

By Lemma $\ref{lemma: 11}$, $P$ can be affinely conjugated to a polynomial that vanishes at $0$ and has leading coefficient $1$. The commutation relation can be written as 

\[
\sum_{k=1}^na_k(\sum_{i=0}^mQ_i(x_1,\dotsc,x_{\nu-1})x_\nu^i)^k=\sum_{i=0}^mQ_i(P(x_1),\dotsc,P(x_{\nu-1}))P(x_\nu)^i
\]

Equating the maximal powers of $x_\nu$ on both sides, we get that 

\begin{equation}\label{3.1}
    Q_m(x_1,\dotsc,x_{\nu-1})^n=Q_m(P(x_1,\dotsc,x_{\nu-1}))
\end{equation}

Under the conditions of the main theorem \ref{th: main}, $Q_m$ is non-constant by Lemma $\ref{lemma: 3.2}$. 

\begin{lemma}\label{lemma: 3.3}
    Given that $Q(x_1,...,x_k)\in\mathbb{C}[x_1,...,x_k]$ is non-constant, and $P(y)\in\mathbb{C}[y], P(0)=0, \deg(P)=n>1$, $(\ref{3.1})$ is only solvable in polynomials that are affinely conjugate to monomials. 
\end{lemma}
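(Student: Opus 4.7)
The plan is to induct on $k$, the number of variables of $Q$. The essential output of the induction is that $P$ is affinely conjugate to $x^n$; the ``$Q$ is conjugate to a monomial'' conclusion then follows by combining this with Lemma~\ref{lemma: 3.1}.

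For the base case $k=1$, equation~(\ref{3.1}) reduces to the one-variable identity $Q(x)^n=Q(P(x))$ with $Q\in\bC[x]$ non-constant, which is precisely the situation of equation~(\ref{1}) analyzed in Section~2. I would reproduce the argument there: letting $\eR$ denote the set of roots of $Q$, the identity forces the map $\phi:\eR\to\eR$, $\phi(r):=P(r)$, to be a bijection and forces $P^{-1}(r)=\{\phi^{-1}(r)\}$ for every $r\in\eR$, so $P(y)-r=(y-\phi^{-1}(r))^n$. Comparing two roots and equating the coefficients of $y^{n-1}$ then collapses $\eR$ to a single root $r_0$, yielding $P(y)=(y-r_0)^n+r_0$ (conjugate to $y^n$ via $\alpha(y)=y-r_0$) and $Q(x)=q(x-r_0)^m$.

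For the inductive step with $k\geq 2$, expand $Q=\sum_{j=0}^m Q_j(x_1,\ldots,x_{k-1})\,x_k^j$ with $Q_m\neq 0$, and equate the coefficients of $x_k^{mn}$ in~(\ref{3.1}); since $P$ has leading coefficient $1$, the result is the identity $Q_m(x_1,\ldots,x_{k-1})^n=Q_m(P(x_1),\ldots,P(x_{k-1}))$, which is again of the form~(\ref{3.1}) but in $k-1$ variables. If $Q_m$ is non-constant the inductive hypothesis applies directly to $Q_m$ and gives $P$ conjugate to $x^n$. Otherwise $Q_m\equiv c$ is a nonzero constant (with $c^{n-1}=1$) and $m\geq 1$, so the pure monomial $c\,x_k^m$ appears in $Q$; substituting $x_1=\cdots=x_{k-1}=0$ into~(\ref{3.1}) and using $P(0)=0$ produces the non-constant one-variable identity $Q(0,\ldots,0,x_k)^n=Q(0,\ldots,0,P(x_k))$, and the base case again yields $P$ affinely conjugate to $x^n$.

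Once $P(y)=(y-r_0)^n+r_0$, the substitution $x_i\mapsto y_i+r_0$ converts~(\ref{3.1}) into $\tilde Q(y_1,\ldots,y_k)^n=\tilde Q(y_1^n,\ldots,y_k^n)$ with $\tilde Q(y):=Q(y_1+r_0,\ldots,y_k+r_0)$, so Lemma~\ref{lemma: 3.1} forces $\tilde Q$ to be a scaled monomial and hence $Q$ itself to be a shifted monomial. The main obstacle I anticipate is the constant-$Q_m$ branch of the inductive step, where one cannot apply the induction directly to $Q_m$; the key observation unlocking this branch is that constancy of $Q_m$ is precisely what forces the pure monomial $c\,x_k^m$ to appear in $Q$, which in turn makes the specialization to the common fixed point $0$ of $P$ a non-trivial one-variable equation to which the base case applies.
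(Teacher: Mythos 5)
Your proof is correct, and your inductive step takes a genuinely different -- and cleaner -- route than the paper's. The paper descends through the specialization at the fixed point, setting $Q_0(x_1,\dotsc,x_{k-1}):=Q(x_1,\dotsc,x_{k-1},0)$ and using $P(0)=0$ to show that $Q_0$ again satisfies (\ref{3.1}); when $Q_0$ happens to be constant this forces a lengthy sub-analysis (writing $Q=c+x_k^jq$, showing the lowest-order term of $P$ must be linear, showing the first nonvanishing coefficient $q_1$ must be constant, and only then specializing the remaining variables to $0$, with a separate divisibility argument when $c=0$). You instead descend through the leading coefficient $Q_m$ in $x_k$, which satisfies the $(k-1)$-variable version of (\ref{3.1}) because $P$ is monic -- the same computation that produced (\ref{3.1}) in the first place -- and your key observation is that the two branches are complementary: if $Q_m$ is non-constant the induction applies to it, while if $Q_m\equiv c\neq 0$ then necessarily $m\geq 1$, so the pure power $cx_k^m$ survives the specialization $x_1=\cdots=x_{k-1}=0$ and the one-variable base case applies directly. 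This eliminates the paper's entire case analysis. One point to make explicit: the reduction to $Q_m$ uses that $P$ has leading coefficient $1$ (otherwise the coefficient of $x_k^{mn}$ produces $Q_m^n=a_n^{m}Q_m(P(x_1),\dotsc,P(x_{k-1}))$, which is not literally of the form (\ref{3.1})); this normalization is supplied by Lemma \ref{lemma: 11} in the paragraph preceding the lemma but is not stated among its hypotheses, so it should be recorded as part of the induction.
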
 

\begin{proof}
    This has been proven for $k=1$ in section 2. Suppose that the theorem is proven for $k=m-1$. Suppose that $Q_0(x_1,...,x_{m-1})\equiv Q(x_1,...,x_{m-1},0)$ is non-constant. Then $$Q_0(P(x_1),...,P(x_{m-1}))=Q(P(x_1),...,P(x_{m-1}),0)=$$$$=Q(P(x_1),...,P(x_{m-1}),P(0))=Q(x_1,...,x_{m-1},0)^n=Q_0(x_1,...,x_{m-1})^n.$$
    Unless $Q_0$ is constant, $P$ must be conjugate to a monomial by the inductive hypothesis. By Lemma \ref{lemma: 3.1}, $Q$ must be conjugate to a monomial as well. 

    Suppose now that $Q_0\equiv c$, where $c$ is some non-zero constant. Then $$Q=c+x_m^kq(x_1,...,x_m), k>0, q(x_1,...,x_{m-1},0)\neq0.$$  The equation in the statement of the corollary becomes $$c+P(x_m)^kq(P(x_1),...,P(x_m))=(c+x_m^kq(x_1,...,x_m))^n.$$ Let the smallest power of $x_m$ in $P(x_m)$ be $t>0$. If $c\neq0$, then the smallest positive power of $x_m$ on the right hand side is $k$, while the smallest positive power of $x_m$ on the left hand side is $tk$. Since $t>0$, for these powers to be equal to each other $t=1$, making $P(x)=ax+x^2p(x)$, and $Q(x_1,...,x_m)=c+x_m^{k}q_1(x_1,...,x_{m-1})+x_m^{k+1}q_2(x_1,...,x_m), q_1\neq0.$
    Then 
    \begin{equation}\label{3.2}
        \begin{split}
            c+P(x_m)^{k}q_1(P(x_1),...,P(x_{m-1}))+P(x_m)^{k+1}q_2(P(x_1),...,P(x_m))= \\
            (c+x_m^{k}q_1(x_1,...,x_{m-1})+x_m^{k+1}q_2(x_1,...,x_m))^n.
        \end{split}
    \end{equation}
    Setting the terms with lowest positive power of $x_m$ equal each other, $$a^kx_m^{k}q_1(P(x_1),...,P(x_{m-1}))=nc^{n-1}x_m^{k}q_1(x_1,...,x_{m-1}).$$ 
    However, since $\deg P>1$, this is only possible when $q_1=b\neq0$ is a constant.
    Setting $x_1=\dotsc=x_{m-1}=0$ and $q_0\equiv q_2(0,...,0,x_m)$, $(\ref{3.2})$ simplifies to 
    \begin{equation}\label{3.3}
        c+bP(x_m)^{k}+P(x_m)^{k+1}q_0(P(x_m))=(c+bx_m^{k}+x_m^{k+1}q_0(x_m))^n.
    \end{equation}
    Since $(\ref{3.3})$ is of the type $Q(P)=Q^n$ in one variable, it is only solvable when $P(x)$ is conjugate to a monomial by section 2. Then by Lemma $\ref{lemma: 3.1}$, this means that $Q$ must also be conjugate to a monomial. 

    If $c=0$, then $$P(x_m)^kq(P(x_1),...,P(x_m))=x_m^{kn}q(x_1,...,x_m)^n,$$ and since $q(P(x_1),...,P(x_m))$ is not divisible by $x_m,$ then $P(x_m)^k$ has to divide $x_m^{kn}$. However, since $\deg(P)=n$, this only leaves the possibility that $P=x^n$, and once again $Q$ has to be a monomial by the previous lemma.
\end{proof}

That proves the main Theorem $\ref{th: main}$.

\end{document}